\newtheorem*{thm}{Theorem}
\newtheorem*{case}{Case}
\newtheorem{theorem}{Theorem}[section]
\newtheorem{lemma}[theorem]{Lemma}
\newtheorem{corollary}[theorem]{Corrolary}
\theoremstyle{definition}
\newtheorem{definition}[theorem]{Definition}
\newtheorem{question}[theorem]{Question}
\newtheorem{note}[theorem]{Note}
\begin{document}
\author{Lev Soukhanov}
\title{On the phenomena of constant curvature in the diffusion-orthogonal polynomials}
\maketitle
\begin{abstract}
We consider the systems of diffusion-orthogonal polynomials, defined in the work [1] of D. Bakry, S. Orevkov and M. Zani and (particularly) explain why these systems with boundary of maximal possible degree should always come from the group, generated by reflections. Our proof works for the dimensions $2$ (on which this phenomena was discovered) and $3$, and fails in the dimensions $4$ and higher, leaving the possibility of existence of diffusion-orthogonal systems related to the Einstein metrics.

The methods of our proof are algebraic / complex analytic in nature and based mainly on the consideration of the double covering of $\mathbb{C}^d$, branched in the boundary divisor.

Author wants to thank Stepan Orevkov, Misha Verbitsky and Dmitry Korb for useful discussions.
\end{abstract}

\footnote{
The author is partially supported by AG Laboratory HSE, RF government grant, ag.  11.G34.31.0023.}

\tableofcontents

\section{Introduction}
This paper is dedicated to the explanation of the strange correspondence discovered in the work [1], in which the following problem is considered: let $\Omega \in \mathbb{R}^d$ be a compact set, $\mu$ - integrable continuous measure on it, $L$ - elliptic operator of second order, adjoint w.r.t. $L^2(\Omega, \mu)$, preserving the space of polynomials of degree $\leq n$ for any $n$. These objects are called (multidimensional) diffusion orthogonal polynomials systems. As the operator $L$ is self-adjoint, it is of the form $L = \frac{1}{\mu}\partial_i g^{ij}\mu \partial_j$.

In [1], it is proved that the boundary is contained in the algebraic hypersurface $D = 0$, where $D = det(g^{ij})$. As coefficients of $g^{ij}$ are polynomials of degree at most $2$, maximal degree of $D$ is $2d$.

Classification problem is completely solved in dimension two, and it happens that for all the cases where Zariski closure of the boundary has the maximal possible degree $4$, the metric $g_{ij}$ is of constant nonnegative curvature (and the domains $\Omega$ can be identified with the fundamental domains of the reflection groups acting on the sphere or the Euclidean plane). Also, the measure in these cases is just $\mu = \sqrt{G}$, where $G = det(g_{ij}) = \frac{1}{D}$, and $L$ is just a standard Laplace-Beltrami operator for this metric.

\begin{thm} The main theorem of this paper states that these metrics are direct sums of Einstein metrics for any dimension with the assumptions from the work [1] (maximal degree of the boundary).
\end{thm}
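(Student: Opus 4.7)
\medskip
\noindent\textbf{Plan.} Following the abstract's hint, I would work on the branched double cover $\pi:\tilde X\to\mathbb{C}^d$ defined by $w^2=D(x)$, where $D=\det(g^{ij})$ is the boundary polynomial of maximal degree $2d$. On $\tilde X$ the function $\sqrt{D}$ becomes single-valued, so both the volume density $\mu=1/\sqrt{D}$ and the various geometric objects derived from the metric become meromorphic, with controlled pole behavior along the ramification divisor $R=\{w=0\}$.

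\medskip
\noindent\textbf{Step 1: Smooth extension.} My first task would be to verify that the pullback $\pi^* g$ extends to a smooth, non-degenerate Riemannian metric across $R$. The naive transverse behavior $g_{ij}\sim dr^2/r$ near the boundary $B=\pi(R)$ is regularized by the coordinate change $r=w^2$, which turns it into $4\,dw^2$. The maximal-degree hypothesis $\deg D=2d$ is precisely what makes this cancellation clean, in the sense that nothing tangential to $B$ survives as a genuine singularity after pulling back. Compactifying to $\bar X\to\mathbb{P}^d$, the adjunction formula gives $K_{\bar X}=\pi^*(K_{\mathbb{P}^d}+\tfrac12 B)=\pi^*\mathcal{O}(-1)$, identifying $\bar X$ as a Fano variety that carries the lifted metric on its smooth part.

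\medskip
\noindent\textbf{Step 2: Parallel Ricci via Bochner.} The polynomial-preservation property of $L$ lifts to a corresponding statement for $\Delta_g$ acting on the graded algebra of pullbacks of affine polynomials from $\mathbb{C}[x_1,\ldots,x_d]$. I would then apply the Bochner identity $\tfrac12\Delta_g|\nabla f|^2=|\mathrm{Hess}\,f|^2+\langle\nabla\Delta_g f,\nabla f\rangle+\mathrm{Ric}_g(\nabla f,\nabla f)$ to the lifted coordinate functions and compare degrees on the two sides. Combined with the input $\deg g^{ij}\le 2$, these polynomial identities should force $\nabla\mathrm{Ric}_g=0$. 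The de Rham decomposition theorem applied to the (universal cover of the) Riemannian manifold $(\Omega,g)$ then delivers the splitting into a product of Einstein factors that the theorem asserts.

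\medskip
\noindent\textbf{Main obstacle.} The hardest part, I expect, is Step 2 --- extracting parallelism of $\mathrm{Ric}_g$ from the interplay between the polynomial structure of $L$ and the Fano geometry of $\bar X$. The fine-tuning of which polynomial eigenfunctions produce independent Bochner constraints is presumably what governs why the sharper constant-curvature conclusion holds for $d=2,3$ but fails for $d\ge 4$, leaving room for the irreducible Einstein examples alluded to in the abstract.
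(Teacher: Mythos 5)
Your Step 1 matches the paper's starting point (Lemma 3.1: the pulled-back metric on the double cover $y_0^2=D$ is regular and non-degenerate in codimension $2$), though the Fano compactification and adjunction computation play no role in the actual argument. The genuine gap is in Step 2. You propose to extract $\nabla\mathrm{Ric}_g=0$ from Bochner identities applied to lifted coordinate functions and then invoke de Rham splitting, but you do not carry this out, and it is far from clear that degree comparisons in the Bochner formula yield parallelism of the Ricci tensor. Moreover, you misplace where the maximal-degree hypothesis $\deg D=2d$ enters: it is not needed to regularize the metric across the branch locus (that follows from the divisibility condition $D\mid g^{ij}\partial_j D$); it is needed for a degree count on the \emph{inverse} metric, which is the actual engine of the proof.

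The paper's route is much more elementary and entirely algebraic. Since $g_{ij}$ is a cofactor matrix divided by $D$, and the cofactors have degree $\le 2(d-1)$ while $\deg D=2d$, one gets $\deg(g_{ij})\le -2$ as a rational function; chasing degrees through the Christoffel symbols and the curvature tensor gives $\deg(R^{ij})\le 2$ for the twice-raised Ricci tensor, and regularity of $R^{ij}$ on $\mathbb{C}^d$ follows from smoothness of the metric on the double cover plus the descent lemma. Thus $R^{ij}$ is itself a polynomial symmetric $2$-tensor of degree $\le 2$ descending from the cover, and the paper \emph{defines} irreducibility so that any such tensor is proportional to $g^{ij}$ — no Bochner identity, no parallelism, no de Rham theorem. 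The decomposition into irreducibles is likewise algebraic: a second descending cometric $s^{ij}$ produces a constant symmetric operator $A^i_j=s^{ik}g_{kj}$ whose eigenspace decomposition splits $g^{ij}$ into blocks after a linear change of coordinates, and a further argument shows $D$ factors accordingly. Your de Rham splitting, even if obtained, would give only a local Riemannian product and would still leave you to prove the polynomial, coordinate-level splitting that the theorem asserts. To repair your proposal you would need to replace Step 2 wholesale with the degree count on $R^{ij}$ and the irreducibility/descent formalism.
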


We are unable to prove anything about "non-negative" part, and able to prove that angles of the domain are of the form $\frac{\pi}{n}, n \in \mathbb{N}$.

The paper is organised as follows: in the next section, we will state the fact from the complex algebraic geometry which we are going to prove and show how the core theorem follows from it. Then, we will define irreducible models and prove it for them, and then show that any model can be decomposed to the direct sum of irreducibles. Then, we present the theorem about angles, relation to the reflection groups and some more or less natural conjectures, and the last section is an appendix, devoted to the proof of the angles theorem.

\section{Reformulation of the statement}

Let $\Omega, g^{ij}, \mu = \sqrt{G}$ be as stated in the introduction and the work [1]. Zariski closure of $\partial \Omega$ is the divisor $D = 0$, and $\mu$ is integrable in $\Omega$, hence have no multiple components.

Then, Laplace-Beltrami operator is of the form

\begin{equation}
\Delta = g^{ij} \partial_i \partial_j + (\partial_j g^{ij} - -1/2 \frac{g^{ij} \partial_j D}{D}) \partial_i
\end{equation}

As in [1], we want it to have regular coefficients and to preserve the subspace of polynomials of degree $\leq n$ for any $n$. Hence, $deg(g^{ij}) \leq 2$.
We, as in introduction, assume $deg(D) = 2d$. From the fact that it has regular coefficients follows that $D$ divides $g^{ij} \partial_j D$ .

So, our setting will be the following:

Let $g^{ij}$ be the polynomial symmetric tensor with matrix components of degree $\leq 2$ on $\mathbb{C}^d$, $D = det(g^{ij})$, degree of $D$ is $2d$ (maximal possible), $D$ has no multiple components and the condition

\begin{equation}
(g^{ij} \partial_j D) { \vdots }{ D }
\end{equation}

holds.

\begin{theorem}
It is the direct sum of metrics with $Ric(g) = \lambda g$ (maybe, with different $\lambda$'s).
\end{theorem}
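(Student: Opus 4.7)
\medskip

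\noindent\textbf{Proof plan.}
The plan is to translate the divisibility condition into a geometric statement on the double cover $\pi\colon Y\to\mathbb{C}^d$ given by $Y=\{(x,y):y^2=D(x)\}$, reduce to an irreducible case via an algebraic decomposition, and finally establish the Einstein conclusion by a rigidity argument on the compactification.

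First I would unpack the hypothesis. Since $\deg g^{ij}\le 2$ and $\deg\partial_j D=2d-1$, the divisibility $D\mid g^{ij}\partial_j D$ forces $g^{ij}\partial_j D = D\cdot W^i$ for some affine-linear polynomial vector field $W^i$. Geometrically, at a smooth point of $\{D=0\}$ the covector $dD$ lies in the kernel of the degenerating inverse metric, so the metric degenerates transversally to the boundary. Pulled back to $Y$, the branch coordinate $y$ has polynomial gradient $g^{ij}\partial_j y=\tfrac{1}{2}yW^i$, and $|\mathrm{grad}\,y|^2=\tfrac{1}{4}W^i\partial_i D$ is a polynomial of degree at most $2d$. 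In particular the pullback of $g^{-1}$ extends regularly across the ramification, and $Y$ compactifies to a Fano double cover $X\to\mathbb{P}^d$ branched in the closure $\bar D$, with $K_X=-\pi^{*}H$ since $\deg\bar D=2d$.

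Following the outline announced in the introduction, I would then define an \emph{irreducible model} as one in which no nontrivial affine change of coordinates block-diagonalizes $g^{ij}$ into a direct sum of polynomial metrics in disjoint variable sets (equivalently, no compatible factorization $D=D_1(x')D_2(x'')$ exists). If the model is reducible, the affine-linearity of $W^i$ together with the degree bound on $g^{ij}$ should allow an explicit block decomposition producing the claimed direct sum, so it suffices to treat the irreducible case. In that case I would combine the polynomial identities with a rigidity argument: the affine flow generated by $W^i$ preserves each level set of $D$, and together with $\deg g^{ij}\le 2$ this should yield enough symmetry to conclude that $g$ is locally homogeneous, whence the Ricci tensor is a scalar multiple of $g$.

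The main obstacle I anticipate is exactly this last step: passing from purely algebraic constraints (polynomial degree bounds, divisibility, affine-linearity of $W^i$) to the differential-geometric Einstein conclusion. Dimensions $2$ and $3$ are favourable because of accidental low-dimensional rigidity (in dimension $2$ the Ricci tensor is automatically a scalar multiple of the metric, and in dimension $3$ Ricci determines the full curvature tensor), so the algebraic constraints cut out the answer directly. Starting from dimension $4$ genuine Einstein metrics that are not of constant curvature appear, and any argument must be delicate enough to admit them while still excluding Ricci tensors not proportional to $g$; this matches the authors' remark that their method succeeds in dimensions $2$ and $3$ and leaves dimension $\geq 4$ open.
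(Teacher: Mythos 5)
Your setup (the double cover branched in $D=0$, reduction to an irreducible case, and the observation that the pullback of the cometric extends across the ramification) is the same as the paper's, but your proposal is missing the one idea that actually makes the Einstein conclusion fall out, and the substitute you offer would not work. The paper's key step is a degree count on the raised Ricci tensor: since $\deg D=2d$ and $\deg g^{ij}\le 2$, one gets $\deg g_{ij}\le -2$, hence $\deg\Gamma^k_{ij}\le -1$, $\deg R_{ij}\le -2$, and therefore $R^{ij}=g^{ik}g^{jl}R_{kl}$ has degree $\le 2$; it is moreover regular (because the metric pulled back to the double cover is regular and non-degenerate in codimension $2$, and $R^{ij}$ is invariant under the deck involution, so it descends by the Hartogs-type lemma). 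Thus $R^{ij}$ is itself an admissible degree-$\le 2$ polynomial cometric coming from the double cover. The paper then \emph{defines} irreducibility precisely so that this forces $R^{ij}=\lambda g^{ij}$: a model is irreducible if the space of degree-$\le 2$ cometrics descending from the double cover is one-dimensional. Your definition of irreducibility (no block-diagonalization in disjoint variable sets) is a consequence the paper has to prove (Lemmas 4.1--4.3, where a second descending cometric $s^{ij}$ yields a constant symmetric operator $A^i_j=s^{ik}g_{kj}$ whose eigenspace decomposition splits both $g$ and $D$), not the definition that powers the argument.

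Your proposed endgame for the irreducible case --- that the affine field $W^i$ generates a flow preserving level sets of $D$, hence $g$ is ``locally homogeneous,'' hence Einstein --- has two independent gaps. First, preserving the level sets of $D$ gives no reason for the flow to act by isometries of $g$, so local homogeneity does not follow. Second, even if it did, locally homogeneous metrics need not be Einstein (a product of spheres of unequal radii is homogeneous but not Einstein), so the implication ``locally homogeneous $\Rightarrow Ric=\lambda g$'' is false; one would additionally need irreducibility of the isotropy representation, which brings you back to needing a genuine rigidity statement. Finally, you misplace the dimension restriction: the Einstein decomposition (the statement at hand) is proved in the paper for all $d$; it is only the further identification with constant-curvature reflection-group models that is confined to $d\le 3$, because there Einstein implies constant sectional curvature.
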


\section{Proof for irreducible models}

\lemma {Let $S$ be a double covering of $\mathbb{C}^d$ in $D = 0$ (let us recall that $D = 0$ has no components of multiplicity higher than $1$), and let us denote the standard projection of $S$ as $\pi$. Then $\pi^* g_{ij}$ is regular and non-degenerate in codimension 2 (in the smooth points of double covering).}
\proof
We shall prove it analytically from the condition (2). Without loss of generality we can assume that $0$ is the smooth point of $D = 0$ and use local holomorphic coordinates with $x_0 = D$.

Then local coordinates in $S$ are $y_0^2 = x_0$, $y_i = x_i$ for $i > 0$.

$g^{0j}\partial_0 x_0 = g^{0j}$ is divisible by $x_0$, hence by $y_0 ^2$. Also, from $det(g^{ij}) = x_0$ $g^{00}$ is not divisible by $x_0^2$

$\frac{\partial}{\partial x_0} = \frac{-1}{2y_0}\frac{\partial}{\partial y_0}$

So, for $i>0$ $g_{0i}$ will have strictly positive valuation by $y_0$ and $g_{00}$ will have zero valuation.

\lemma Conversely, if there is a cometric $n^{ij}$ on the smooth part of $S$ which is invariant under the involution $y_0 \rightarrow -y_0$, it comes from the cometric on $\mathbb{C}^d$

\proof Its invariance and the fact that $y_0^2 = x_0$ guarantees that it is sum of $A^{ij}\frac{\partial}{\partial x_i}\frac{\partial}{\partial x_j} + B^i y_0 \frac{\partial}{\partial x_i}\frac{\partial}{\partial y_0} + C (\frac{\partial}{\partial y_0})^2$ which equals $A^{ij}\frac{\partial}{\partial x_i}\frac{\partial}{\partial x_j} - 2B^i x_0 \frac{\partial}{\partial x_i} \frac{\partial}{\partial x_0} + 4C y_0(\frac{\partial}{\partial y_0})^2$, where $A, B, C$ are even in $y_0$ and hence depend of $x_0$. Now it is proven in the codimension $2$ and hence, by Hartogs' extension theorem this cometric is regular in $\mathbb{C}^d$

\begin{definition} $g^{ij}$ is called irreducible if for any $\tilde{g}^{ij}$ of degree $\leq 2$ such that it comes from the double covering we have $\tilde{g}^{ij} = \lambda g^{ij}$ \end{definition}

\note For this definition it is not obvious that any model can be decomposed into the direct sum of irreducibles, but the direct sum is, obviously, non-irreducible (because we can take the direct sum of metrics multiplied by different constants).

\lemma $R^{ij} = g^{ik}g^{jl}R_{kl}$ is regular, with matrix components of degree $\leq 2$, where $R_{kl}$ is standard Ricci curvature tensor.
\proof Regularity follows from the fact that the metric is regular and non-degenerate on the double covering (hence, Ricci tensor is regular) and lemma 3.2. Now we need only to check the degree of the matrix components as rational functions, and the fact that they are regular is proven already, so they will be polynomials of degree $\leq 2$.

$deg (g_{ij}) \leq -2$ (here we crucially use $deg(D) = 2d$)

$\Gamma_{ij}^k = g^{sk}\frac{1}{2}(\partial_s g_{ij} - \partial_i g_{js} - \partial_j g_{is})$

$deg(\Gamma_{ij}^k) \leq -1$

$R_{ijk}^l = \partial_i \Gamma_{jk}^l - \partial_j \Gamma_{ik}^l + \Gamma_{is}^l \Gamma{jk}^s - \Gamma_{js}^l \Gamma_{ik}^s$

$deg(R_{ijk}^l) \leq -2$

$R_{ij} = R_{isj}^s$

$deg(R_{ij}) \leq -2$

$R^{ij} = g^{ik}g^{jl}R_{kl}$

$deg(R^{ij}) \leq 2$

\begin{theorem}
Hence, $R^{ij}$ is proportional to $g^{ij}$ for an irreducible $g^{ij}$.
\end{theorem}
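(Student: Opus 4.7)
The plan is to verify that $R^{ij}$ fits into the hypothesis of the irreducibility definition and then to invoke that definition directly. The previous lemma already gives two of the three ingredients: $R^{ij}$ is regular on $\mathbb{C}^d$ and its matrix entries have degree at most $2$. What remains is to argue that $R^{ij}$ ``comes from the double covering'', i.e.\ that it is the descent along $\pi$ of an involution-invariant cometric on $S$.

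For this, I would pull the metric back to $S$. By Lemma 3.1, $\pi^{*}g$ is regular and non-degenerate on the smooth locus of $S$ away from a subset of codimension at least $2$, so on that open set it has an honest Riemannian Ricci tensor $R_{ij}(\pi^{*}g)$, and one may form the cometric
\[
N^{ij} \;=\; (\pi^{*}g)^{ik}\,(\pi^{*}g)^{jl}\,R_{kl}(\pi^{*}g).
\]
Since $\pi$ is a local biholomorphism off the ramification divisor, $N^{ij}$ agrees with $\pi^{*}R^{ij}$ there. The deck involution $y_0\mapsto -y_0$ is an isometry of $\pi^{*}g$, so it preserves $R_{kl}(\pi^{*}g)$ and therefore preserves $N^{ij}$. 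Thus $N^{ij}$ is an involution-invariant cometric defined on the smooth locus of $S$ outside codimension $2$.

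Now Lemma 3.2 (whose proof uses Hartogs' extension precisely to fill in codimension-$2$ gaps) applies to $N^{ij}$ and produces a regular cometric on $\mathbb{C}^d$. This descended cometric must coincide with $R^{ij}$ on the Zariski-open set $\{D\ne 0\}$, and hence everywhere by density. So $R^{ij}$ is a regular degree-$\le 2$ cometric that comes from the double covering, and by Definition 3.3 it must be a scalar multiple of $g^{ij}$.

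The only delicate point I would expect to justify carefully is the compatibility between the two operations ``take Ricci'' and ``pull back / descend along $\pi$'' across the ramification locus; away from that locus the identification is tautological, and across it one relies on the non-degeneracy in codimension $2$ supplied by Lemma 3.1 together with Hartogs. Once these are in place, the theorem is an immediate consequence of the definition of irreducibility, so there is no genuine computational obstacle left.
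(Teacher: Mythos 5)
Your proposal is correct and follows exactly the route the paper intends: the paper states this theorem with no proof beyond the word ``Hence,'' treating it as immediate from Lemma 3.4 (regularity and degree $\leq 2$ of $R^{ij}$, itself obtained by descending the involution-invariant Ricci tensor of $\pi^* g$ via Lemma 3.2) together with Definition 3.3 of irreducibility. You have merely spelled out the descent step that the paper leaves implicit, so there is no substantive difference.
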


\section{Decomposition into irreducibles}

\lemma Let $A \in End(TS)$ be an operator, invariant under the involution. Then its direct image on $\mathbb{C}^d$ is regular.
\proof Let us prove that it is regular in the smooth points of $D = 0$ and then general fact will follow by Hartogs' extension theorem. Local formula (in the notation of the lemma 3.1) for $A$ is $A^i_j \frac{\partial}{\partial y_i} \otimes dy_j$, and as it is invariant under $y_0 \rightarrow -y_0$, $A^0_0$ and $A^i_j$ for $i, j \neq 0$ are even in $y_0$, and $A^i_0, A^0_j$ for $i, j \neq 0$ are odd in $y_0$.

Let us recall that $dy_i = dx_i, \frac{\partial}{\partial y_i} = \frac{\partial}{\partial x_i}$ for $i \neq 0$,

$dx_0 = 2y_0 dy_0, \frac{\partial}{\partial x_0} = \frac{1}{2y_0} \frac{\partial}{\partial y_0}$

So, $\tilde{A^i_j} \frac{\partial}{\partial x_i} \otimes dx_j = A^i_j \frac{\partial}{\partial y_i} \otimes dy_j$ is well defined, regular ($\tilde{A^i_j}$ is constructed by dividing the first row of $A^i_j$ by $2y_0$ and multiplying the first column by the same, but the elements $A^i_0$ are odd in $y_0$, and, hence, they can be divided), with all matrix components even in $y_0$, hence depending on $x_0, ..., x_n$.

\lemma Let $g^{ij}$ be the non-irreducible cometric. Then it can be decomposed (after some linear change of coordinates) as $g^{ij} = a^{ij} + b^{ij}$ where $a^{ij} = 0$ for $i, j \leq k$, $b^{ij} = 0$ for $i, j > k$.

\proof Let $s^{ij}$ be another cometric, coming from the double covering in $D$. Let us consider an operator $A^i_j = s^{ik}g_{kj}$. It is polynomial by the lemma 4.1, and its coefficients are rational fuctions of degree $\leq 0$, so, in fact $A^i_j$ is the symmetric operator with constant coefficients. Let us linearly change the coordinates in such a way that $V = \langle e_0 ... e_k \rangle$, $W = \langle e_{k+1} ... e_{d-1}\rangle$ are eigenspaces of $A$. Then, these subspaces are orthogonal with respect to $g^{ij}$ and hence $g^{ij}$ has a block matrix $a^{ij} \oplus b^{ij}$ (however, $a$ and $b$ could, a priori, depend on all the variables $x_0 ... x_{d-1}$).

From now on cometric $g^{ij}$ is considered to be $a^{ij} \oplus b^{ij}$

\lemma For an any smooth point of $D = 0$ either $V$ or $W$ lies in a tangent space.
\proof Let us consider the double covering. $V$ and $W$ are well-defined on it, and in a smooth point of ramification divisor one of these spaces is invariant under the differential of an involution (as it has only one eigenvalue $-1$).

So, $D = 0$ can be decomposed into the two components, one of them is independent on $x_1, ..., x_k$ and another independent on $x_{k+1}, ..., x_d$.

\begin{theorem}
Any model is the direct sum of irreducible components.
\end{theorem}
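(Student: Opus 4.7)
The plan is induction on the dimension $d$. In the base case $d=1$ any model has cometric $g^{00}$ of degree $\leq 2$ with $D=g^{00}$ of degree $2$; no nontrivial linear decomposition is possible, so the model is automatically irreducible. For the inductive step, assume the statement for all dimensions $<d$ and take a model $g$ in dimension $d$; if $g$ is irreducible in the sense of Definition 3.3 there is nothing to prove, so assume not. By Lemma 4.2 we may choose coordinates so that $g = a\oplus b$, with $a$ supported on $V = \langle e_0,\ldots,e_k\rangle$ and $b$ on $W = \langle e_{k+1},\ldots,e_{d-1}\rangle$, and by Lemma 4.3 the divisor factors as $D = D_V\cdot D_W$ with $D_V\in\mathbb{C}[x_0,\ldots,x_k]$ and $D_W\in\mathbb{C}[x_{k+1},\ldots,x_{d-1}]$. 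It then suffices to show that $a,b$ are themselves models in the strictly smaller dimensions $k+1$ and $d-k-1$ and apply the induction hypothesis.

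To promote $a,b$ to honest models three things are needed: (a) $\det a = c_1 D_V$ and $\det b = c_2 D_W$ for nonzero constants $c_1,c_2$; (b) the divisibility conditions $a^{ij}\partial_j D_V\in(D_V)$ and $b^{ij}\partial_j D_W\in(D_W)$; and (c) the crucial statement that the entries of $a$ depend only on $x_0,\ldots,x_k$ and those of $b$ only on $x_{k+1},\ldots,x_{d-1}$. For (a): by the local standard form from the proof of Lemma 3.1, at a smooth point of $\{D_V=0\}\setminus\{D_W=0\}$ the null covector of $g^{ij}$ is a multiple of $dD_V\in V^*$, so $a(v,w)$ is singular whenever $D_V(v)=0$; hence $D_V\mid\det a$, and combined with $\det a\cdot\det b = D_V\cdot D_W$ together with the maximality $\deg D = 2d$ (which forces the maximal values $\deg D_V = 2(k+1)$ and $\deg D_W = 2(d-k-1)$) one obtains (a). Condition (b) follows by splitting the divisibility of $g$ into its $V$- and $W$-index components and cancelling the factor in the other variables.

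The substantive step, and the main obstacle, is (c). Direct computation of Levi--Civita symbols for the block-diagonal metric shows that the off-block symbols $\Gamma^k_{i\alpha} = \tfrac{1}{2}\,a^{kl}\partial_{\alpha}a_{il}$ (with $k,i\in V$, $\alpha\in W$) and $\Gamma^\gamma_{i\alpha} = \tfrac{1}{2}\,b^{\gamma\beta}\partial_i b_{\alpha\beta}$ (with $\gamma,\alpha\in W$, $i\in V$) vanish identically precisely when (c) holds; equivalently the constant symmetric endomorphism $A = sg^{-1}$ from Lemma 4.2 must be $\nabla^g$-parallel, equivalently the distributions $V,W$ must be parallel for the Levi--Civita connection of $g$. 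A natural route is to enlarge the commuting family of constant endomorphisms by adjoining $A_R = Rg^{-1}$, which is also constant by Lemma 3.5 together with the argument of Lemma 4.2 applied to $R$ in place of $s$; iteratively refining the block decomposition to a maximal one on whose ``irreducible'' blocks the Einstein relation of Theorem 3.6 holds, and then combining the divisibility condition with the maximal-degree determinant constraint $\deg\det a = 2(k+1)$ should rule out any residual cross-block dependence of the entries. Once (c) is established, both $a$ and $b$ are models of dimension strictly less than $d$, and the induction hypothesis completes the proof.
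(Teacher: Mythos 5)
Your overall architecture (induction on dimension, Lemmas 4.2 and 4.3 to get the block splitting $g=a\oplus b$ and the factorization $D=D_V D_W$, and the observation that everything reduces to showing the entries of $a$ depend only on the $V$-variables) matches the paper, and your points (a) and (b) are reasonable elaborations of what the paper leaves implicit. The problem is precisely at the step you yourself flag as ``the substantive step'': point (c) is not proved. The route you sketch --- showing the distributions $V,W$ are parallel for the Levi--Civita connection by adjoining $A_R=Rg^{-1}$ to the commuting family and ``iteratively refining'' the block decomposition --- is left at the level of a hope (``should rule out any residual cross-block dependence''), and it is not clear it closes: even after passing to a maximal common eigenspace decomposition of all available constant endomorphisms, nothing in that construction prevents the entries of an individual block from depending on the variables of the other blocks, which is exactly what (c) must exclude. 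So as written the proof has a genuine gap at its central point.

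The paper closes this gap by a different and complete argument, and the order of operations is the key: it does \emph{not} first prove (c) and then invoke induction. Instead it freezes $x_{k+1},\dots,x_d$ as parameters, notes that for each fixed value of the parameters $a^{ij}$ is an admissible cometric in $x_1,\dots,x_k$ with determinant $D_a$ (independent of the parameters), and applies the induction hypothesis \emph{fiberwise} to get $a^{ij}=\bigoplus_t \lambda_t a_t^{ij}$ with each $a_t^{ij}$ irreducible of degree $2$. Since an irreducible piece is determined up to scalar (Definition 3.3), the only possible dependence on the parameters sits in the scalars $\lambda_t$; but a non-constant $\lambda_t(x_{k+1},\dots,x_d)$ multiplying a degree-$2$ block would force $\deg a^{ij}\geq 3$, contradicting $\deg g^{ij}\leq 2$. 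This degree-counting rigidity is the idea missing from your proposal; you would need either to adopt it or to actually carry out the parallelism argument you gesture at.
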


\begin{proof}
From lemmas 4.2 and 4.3 we have the following situation: $D = D_a D_b$, where $D_a$ depends only on $x_1, ..., x_k$, $D_b$ only on $x_{k+1}, ..., x_d$. Let us consider $a^{ij}$ as a cometric depending of $x_1, ..., x_k$, and consider $x_{k+1}, ..., x_d$ as parameters. For an any fixed tuple of parameters $a^{ij}$ is an admissible cometric with the determinant $D_a$. By induction, we can assume that it is the direct sum of irreducibles $a^{ij} = \bigoplus_t \lambda_t a^{ij}_t$. Only way $a^{ij}$ could depend on parameters is by varying $\lambda$'s. But it is impossible because each $a^{ij}_t$ has degree $2$, and if $\lambda_t$ varied then $a^{ij}$ would have degree at least $3$. Hence, $a^{ij}$ depends only on $x_1, ..., x_k$. The $b^{ij}$, correspondingly, depends only on $x_{k+1}, ... x_d$.
\end{proof}

Now, the theorems 3.6 and 4.4 give us the main statement.

\section{Consequences and conjectures}

Constant curvature models in dimension two are related to groups, generated by reflections. Let us describe briefly how does it happen.

At first, the domain $\Omega$ can be identified with the subset of the space of constant curvature, and the smooth parts of the boundary are geodesic (because the surface $S$ has the involution, which preserves the ramification divisor). The angles of the polygon are always $\frac {\pi}{n}$, and the corresponding singularities of boundary equation (locally) have the form $y^2 = x^n$. It is not strange, as the double covering along such a curve is a famous Kleinian singularity $\mathbb{C}^2 / \frac{\mathbb{Z}}{n\mathbb{Z}}$.

Let us also note that in the dimension $3$ constant Ricci curvature is equivalent to the constant sectional curvature. So, it maybe worth to ask, whether all the models of dimension $3$ has dihedral angles of form $\frac{\pi}{n}$, hence, come from the fundamental domains of reflection groups.

\begin{question}
In what generality or with what additional requirements we could prove the following statement: "Let $S$ be a surface with isolated singularities, endowed with holomorphic non-degenerate metric, defined on a smooth locus of a surface. Then \textit{under additional requirements?} all singularities are Kleinian of type $A_n$".
\end{question}

\begin{theorem}
Let $g^{ij}$ the germ of cometric on $\mathbb{R}^2$ in a point $0$, $\Omega$ be the germ of the closed set such that $\partial \Omega \subseteq \{x \in \mathbb{R}^2|D(x) = 0\}, \sqrt{G}$ is locally integrable in $\Omega$. Then the double covering branching in the divisor $D = 0$ is a Kleinian singularity of type $A_n$
\end{theorem}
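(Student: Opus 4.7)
The plan is to complexify and work with the germ of $D$ as a holomorphic function on $(\mathbb{C}^2,0)$. The goal is to exhibit an analytic change of coordinates putting $D$ into the normal form $D = y^2 - x^{n+1}$ for some $n \ge 0$; the equation $z^2 = D$ then becomes, via $u = z+y$ and $v = z-y$, precisely $uv = -x^{n+1}$, which is the standard model of the $A_n$ Kleinian singularity $\mathbb{C}^2/\mathbb{Z}_{n+1}$.

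The first observation is that local integrability of $1/\sqrt{D}$ on $\Omega$ forces $D$ to be reduced near $0$: a repeated factor $h^2 \mid D$ with $h$ vanishing on a branch of $\partial\Omega$ would make $1/\sqrt{D}$ behave, up to a unit, like $1/|h|$ on the adjacent region of $\Omega$, and this is not integrable.

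The main step is the multiplicity bound $\mathrm{mult}_0 D \leq 2$. For an irreducible branch of $D$, analytically equivalent to $y^a = x^b$ with $\gcd(a,b)=1$, I would carry out the substitution $s = y - x^{b/a}$ in the real-analytic sector of $\Omega$ adjacent to that branch, obtaining $D \approx a\, x^{b(a-1)/a}\, s$; the product $\int s^{-1/2}\, ds \cdot \int x^{-b(a-1)/(2a)}\, dx$ is finite only when $b < 2a/(a-1)$, forcing $\min(a,b) \leq 2$. Thus each irreducible branch of $D$ is either smooth or an $A_k$-type cusp, and in particular has multiplicity at most two at $0$. Combining this with the topological fact that a two-dimensional domain germ in $\mathbb{R}^2$ has at most two real-analytic boundary arcs at $0$ yields $\mathrm{mult}_0 D \leq 2$. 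With this bound, Weierstrass preparation in one of the variables, completion of the square to absorb the linear term, and a rescaling of the other variable produce the normal form $D = y^2 - x^{n+1}$, and the factorisation above identifies the double cover with $A_n$.

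The hard step is the multiplicity bound. The integrability estimate is tight for a branch of $D$ that actually bounds $\Omega$, but when $D$ has extra irreducible components through $0$ that do not lie on $\partial\Omega$ (for instance $D$ could be a product of three concurrent lines while $\Omega$ occupies only one of the sectors they cut out), integrability on $\Omega$ does not directly control those extraneous branches. Ruling them out seems to require either restricting $D$ to the components that actually bound $\Omega$ near $0$, or invoking the BOZ-type regularity condition $D \mid g^{ij}\partial_j D$ implicit in the paper's framework.
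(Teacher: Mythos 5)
Your route---reduce $D$ to the normal form $y^2 - x^{n+1}$ by proving $\mathrm{mult}_0 D \le 2$ from integrability---is genuinely different from the paper's, and the difficulty you flag at the end is not a loose end that a sharper integral estimate will close: the bound $\mathrm{mult}_0 D \le 2$ is simply false under the hypotheses your argument actually uses (reducedness of $D$ plus integrability of $|D|^{-1/2}$ on $\Omega$). Take $D = y(x^2+y^2)$ and $\Omega = \{y \ge 0\}$: then $D$ is reduced, $\partial\Omega \subset \{D=0\}$, and in polar coordinates $|D|^{-1/2}\,dx\,dy = r^{-1/2}(\sin\theta)^{-1/2}\,dr\,d\theta$ is locally integrable on $\Omega$, yet $\mathrm{mult}_0 D = 3$ and the double cover $z^2=D$ is the $D_4$ Kleinian singularity, not $A_n$. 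So the hypothesis that $D$ is the determinant of a cometric satisfying $D \mid g^{ij}\partial_j D$ is indispensable, and your plan never uses it. Two further steps are shaky even for the branches that do bound $\Omega$: ``at most two boundary arcs'' does not bound $\mathrm{mult}_0 D$ by $2$ when those arcs lie on singular branches, and your per-branch criterion $b < 2a/(a-1)$ would, for $a=2$, exclude $y^2 = x^{n+1}$ with $n \ge 3$ --- but $A_3, A_4,\dots$ corners (angles $\pi/4, \pi/5,\dots$) do occur and are integrable (substituting $y = x^{(n+1)/2}t$ inside the cusp gives $\iint |t^2-1|^{-1/2}\,dt\,dx < \infty$).

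The paper closes the hole you identified by a different mechanism and never bounds $\mathrm{mult}_0 D$ by $2$ (it explicitly allows order-$3$ points with ``adjacent'' components). It tracks the multiplicities of the anticanonical section $D\,(\partial_x\wedge\partial_y)^{\otimes 2}$ through a resolution, using integrability on $\Omega$ only to keep the boundary and adjacent exceptional multiplicities $\le 1$ (Lemma 6.7 and the case analysis); this shows the double cover of the resolution carries a nonvanishing holomorphic $2$-form, so the singularity of the double cover is symplectic, hence du Val by Artin's theorem. The passage from du Val to $A_n$ --- i.e., the exclusion of $D_n$ and $E_n$, which is exactly what kills the example above --- comes from the nondegenerate holomorphic metric on the smooth locus of the double cover, which exists because of condition (2) via Lemma 3.1, together with Lemma 6.3 (a finite subgroup of $SL_2(\mathbb{C})$ preserving a nondegenerate quadratic form is cyclic). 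If you want to salvage the normal-form strategy, you would have to import that metric argument to control the branches of $D$ through $0$ that do not bound $\Omega$; as written, the ``hard step'' you name is where the whole proof lives.
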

\begin{proof} in appendix.
\end{proof}

\begin{note}
Let us consider the $d$-dimensional model. Locally around the edge of codimension $2$ we can consider our singularity as the family of singularities of dimension $2$. The proof of the theorem 5.2 works for such a families, because the integrability conditions of the model (no components with multiplicity higher than $1$ on the blow-up) implies the same conditions on each model of the family. So all possible singularities of codimension $2$ are $A_n \times \mathbb{C}^{d-2}$
\end{note}

\begin{corollary}
Dihedral angles of $\Omega$ are of the form $\frac{\pi}{n}$.
\end{corollary}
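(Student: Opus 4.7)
The plan is to apply Note~5.3 to reduce to a two-dimensional transverse problem and then read the dihedral angle off the analytic type of the $A_n$-singularity. First I would fix a point $p$ in the relative interior of a codimension-two stratum of $\partial\Omega$, where two smooth boundary faces meet, and pick a real affine $2$-plane $\Sigma \subset \mathbb{R}^d$ through $p$ transverse to the edge. Restricting the cometric, the divisor $D$, and $\Omega$ itself to $\Sigma$ yields a two-dimensional germ to which Note~5.3 directly applies: the local branched double cover $S_\Sigma$ of the complexification $\Sigma_{\mathbb{C}}$ along $D|_\Sigma$ is isomorphic to a Kleinian singularity $A_n$ for some integer $n \geq 1$.

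Next I would translate the algebraic datum into Riemannian language. On one hand, doubling the real germ $(\Omega \cap \Sigma, p)$ across its corner produces a topological cone of total angle $2\alpha$, where $\alpha$ is precisely the dihedral angle to be computed; by Lemma~3.1 the pulled-back cometric is regular and non-degenerate on the smooth locus of the cover, so the induced Riemannian metric is honest away from the apex and has at worst a conical singularity there. On the other hand, under the standard identification $A_n \simeq \mathbb{C}^2/\mathbb{Z}_{n+1}$ with the action $(u,v) \mapsto (\zeta u,\, \zeta^{-1} v)$, $\zeta = e^{2\pi i/(n+1)}$, the antiholomorphic involution compatible with the covering involution can be arranged to be $(u,v) \mapsto (\bar v, \bar u)$, whose fixed set is the real $2$-plane $\{v = \bar u\}$ on which the residual $\mathbb{Z}_{n+1}$-action is rotation of the $u$-disc by $2\pi/(n+1)$. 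Hence the real locus of $S_\Sigma$ near its singular point is a flat cone of total angle $2\pi/(n+1)$.

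Equating the two descriptions of the same real germ then gives $2\alpha = 2\pi/(n+1)$, so $\alpha = \pi/(n+1)$, which is of the form $\pi/k$ with $k \in \mathbb{N}$ as claimed.

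The step I expect to be the main obstacle is the identification in the previous paragraph: namely, showing that the real structure on $S_\Sigma$ inherited from the real domain $\Omega \subset \mathbb{R}^d$ is conjugate, in the category of singular germs, to the standard antiholomorphic involution on the model $\mathbb{C}^2/\mathbb{Z}_{n+1}$. Concretely, one must produce local real-analytic coordinates on $S_\Sigma$ in which the covering involution $\sigma$, the complex conjugation coming from $\Sigma \subset \Sigma_{\mathbb{C}}$, and a generator of the stabilizer $\mathbb{Z}_{n+1}$ simultaneously take standard form. Once such coordinates are available, the cone-angle computation is a one-line orbifold calculation, and the corollary follows.
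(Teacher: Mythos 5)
Your proposal follows the paper's own proof essentially step for step: cut with a $2$-plane transverse to the edge, invoke Note 5.3 / Theorem 5.2 to see that the local double cover is an $A_n$ germ, and then read the dihedral angle off the flat cone structure of the real locus of $\mathbb{C}^2/(\mathbb{Z}/n\mathbb{Z})$ (the paper indexes by the order $n$ of the cyclic group rather than your $n+1$, a harmless convention difference). The two points you leave open are exactly the ones the paper settles in one line each: the angle of $\Omega\cap\Sigma$ in the restricted metric really is the dihedral angle because the preimage of $\Sigma$ in the double cover is smooth and invariant under the isometric covering involution, forcing $\Sigma$ to be $g$-orthogonal to the boundary faces; and the normal form for the group action and real structure that you flag as the main obstacle is supplied by Lemma 6.3 together with Hartogs' theorem, since the cyclic stabilizer must preserve the nondegenerate linearized metric and volume form and hence lies in $SO(2,\mathbb{C})$, acting by standard rotations on the real points.
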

\begin{proof}
Consider the $2$-dimensional plane, which intersects the angle transversally. It is orthogonal w.r.t to our riemannian metric to the smooth part of the boundary, because its preimage under the double covering is smooth and invariant under the reflection in this smooth part (involution of the double covering). We can restrict the metric on this plane and prove the fact only for dimension $2$, and by the note 5.3 it will be the singularity of type $A_n$.

Let us consider the singular point $x$ on the boundary. Let us consider the double covering of its neighborhood in $\mathbb{C}^2$. It is endowed with the holomorphic metric, non-degenerate in the smooth locus. By the theorem 5.2 it can be identified with the quotient of the open neighborhood of the point $0 \in U \subset \mathbb{C}^2$ by the group $\frac{\mathbb{Z}}{n\mathbb{Z}}$, acting by the standard rotations. The pullback of the holomorphic metric on $U$ is well-defined and non-degenerate everywhere by the Hartogs' theorem. Hence, linearization of the metric in $x$ is $\mathbb{C}^2 / \frac{\mathbb{Z}}{n\mathbb{Z}}$ with the standard flat metrics on it.
\end{proof}

\begin{theorem}
Any model of constant positive sectional curvature can be identified with the quotient $S^d / G$, where $G$ is the group, generated by reflections.
\end{theorem}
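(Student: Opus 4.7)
The plan is to realise $\Omega$ as a geodesic polytope $P\subset S^d$ whose dihedral angles are all of the form $\pi/n$, and then to apply Poincar\'e's polyhedron theorem to identify $\Omega$ with $S^d/G$.

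First I would globalise the local isometry with $S^d$. Constant positive sectional curvature means that $\Omega^\circ$ is locally isometric to $S^d$. Because $\Omega$ is a compact domain in $\mathbb{R}^d$ cut out by a real algebraic hypersurface, it is simply connected, so the developing map produces an isometric immersion $\mathrm{dev}\colon\Omega^\circ\to S^d$, and this extends continuously to $\partial\Omega$.

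Next I would show that each smooth component of $\partial\Omega$ is totally geodesic. The input is the branched-covering picture of Section~3: on the double cover $S\to\mathbb{C}^d$ the pulled-back metric is regular and non-degenerate in codimension two (Lemma~3.1), and the covering involution $\sigma$ fixes the ramification divisor pointwise while acting by $-1$ on its conormal bundle. An isometric involution with such a differential forces the second fundamental form of its fixed locus to vanish; so projecting down and restricting to the real locus, every smooth face of $\partial\Omega$ is totally geodesic. Hence $\mathrm{dev}$ carries each smooth face into a great $(d-1)$-sphere, and by Corollary~5.4 the dihedral angles along codimension-$2$ strata have the form $\pi/n$. The image $P:=\mathrm{dev}(\Omega)$ is therefore a closed geodesic polytope in $S^d$ with all dihedral angles of Coxeter type.

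Finally I would invoke Poincar\'e's polyhedron theorem in the spherical case: a closed geodesic polytope in $S^d$ whose dihedral angles all equal $\pi/n$ is a strict fundamental domain for the finite Coxeter group $G\subset O(d+1)$ generated by reflections in its walls, and $S^d/G\cong P$ as metric spaces. Pulling back via $\mathrm{dev}$ gives the desired identification $\Omega\cong S^d/G$.

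The principal obstacle is the totally-geodesic step. Corollary~5.4 only controls the codimension-$2$ strata of $\{D=0\}$, so the behaviour of the developing map along higher-codimension strata (corners, edges of corners, etc.) requires an inductive argument based on Note~5.3 and a further application of Hartogs-type extension on the branched cover. A secondary subtlety is verifying that $\mathrm{dev}$ is globally injective, so that $P$ is an embedded polytope to which the spherical Poincar\'e theorem genuinely applies; here boundedness of $\Omega$ in $\mathbb{R}^d$, together with the algebraicity of the wall arrangement and the $\pi/n$ angle condition forcing local injectivity at the corners, should suffice.
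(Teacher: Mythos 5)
Your geometric outline follows the same route as the paper: both arguments rest on Corollary 5.4 (dihedral angles of the form $\pi/n$) plus Poincar\'e's fundamental polyhedron theorem to identify $\Omega$, as a constant-curvature polyhedron with Coxeter angles, with a fundamental domain of a reflection group $G$ acting on $S^d$. The extra detail you supply --- the developing map, and total geodesicity of the walls via the covering involution acting by $-1$ on the conormal bundle of the ramification divisor --- is consistent with the paper (which makes the same geodesicity remark in dimension $2$ at the start of Section 5) and is a reasonable expansion of what the paper compresses into one sentence.

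However, you stop at an identification of $\Omega$ with $S^d/G$ \emph{as a Riemannian manifold with boundary}, whereas a ``model'' in this paper carries a real algebraic (polynomial) structure, and the theorem asserts an identification of models. The paper's proof has a second step you omit entirely: it recovers the algebraic structure from the Riemannian one by declaring a smooth function regular if and only if it is a finite sum of Laplace--Beltrami eigenfunctions. On the model side these eigenfunctions are polynomials by the defining hypothesis of a diffusion-orthogonal system; on $S^d/G$ the eigenfunctions of the spherical Laplacian are ($G$-invariant) spherical harmonics, which are regular. Since both algebraic structures are determined by the same Riemannian datum, the metric isometry upgrades to an isomorphism of models. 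Without this step your identification does not address the polynomial structure that the statement is actually about, so you should add it (or an equivalent argument) to close the proof.
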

\begin{proof}
As a riemannian manifold with boundary, the polyhedra of constant curvature with dihedral angles of $\frac{\pi}{n}$ can be identified with the domain of group $G$, generated by reflections by Poincare fundamental polyhedron theorem. The real algebraic structure can be recovered from the riemannian structure as follows: the smooth function is called regular if and only if it is the finite sum of Laplacian eigenfunctions (because we required that Laplace-Beltrami operator eigenfunctions are polynomials), and the algebraic structure on $S^d / G$ can be recovered in the same way (eigenvectors of spherical Laplacian are spherical harmonics, and spherical harmonics are regular on $S^d$).
\end{proof}

\begin{note}
Let us note that the theorem 5.2 considers only codimension $2$ singularities on the real part of the divisor $D = 0$, and also only singularities contained in $\partial \Omega$, and the theorem 5.5 completely ignores all other singularities, because it uses only riemannian structure of $\Omega$, hence, post factum it guarantees absence of such singularities.
\end{note}

\begin{question} We didn't prove anything about positivity of the curvature. Is it true and if it is, in what generality?
\end{question}

Expanding the previous question, let us consider the upper half plane $H$ with the cocompact group $\Gamma$, acting on it, generated by reflections. We can try to turn $H / \Gamma$ into algebraic variety with the following construction. Let us take the $\Gamma$-invariant eigenfunctions of Laplacian. If their linear span (without any topological structure) is closed under the multiplication, we can take it as a commutative algebra of functions on $H / \Gamma$.

\begin{question} Is it true? If yes, is this algebra finitely generated and when is it isomorphic to $\mathbb{C}^2$? If yes, what kind of condition should hold for the cometric on $\mathbb{C}^2$ to make these cases to be a part of classification?
\end{question}

\begin{question} Does there exist a model in dimension $4$ which is not the direct sum of metrics of constant curvature? If yes, what kind of polynomial series arise from it?
\end{question}

\section{Appendix: the proof of the theorem 5.2}

This appendix is more or less independent of the other parts of the paper and consists of considerations about integrability of volume forms in a style of real log-canonical threshold theory from [2], however, we did not use any results from [2] directly.

\begin{definition}
The singularity is called du Val, if it is of the form $\mathbb{C}^2 / G$, where $G$ is a finite subgroup of $SL_2(\mathbb{C})$.
\end{definition}

\begin{definition}
The singularity is called symplectic, if it has a resolution which is a holomorphically-symplectic variety.
\end{definition}

\begin{thm}[Artin, 1966]
Every symplectic singularity of dimension $2$ is du Val singularity.
\end{thm}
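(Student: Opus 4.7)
The strategy rests on a coincidence specific to dimension two: a non-degenerate holomorphic $2$-form on a surface is the same as a nowhere-vanishing section of the canonical bundle. So if $\pi : \tilde{X} \to X$ is a resolution with $\tilde{X}$ holomorphically symplectic, the symplectic form $\omega$ trivializes $K_{\tilde{X}}$, i.e.\ $K_{\tilde{X}} \cong \mathcal{O}_{\tilde{X}}$. The rest of the argument is just a translation of this triviality into the language of surface singularities.

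The first step I would carry out is to show that $\tilde{X}$ is already the minimal resolution of $X$. If $E$ were a $(-1)$-curve contained in the exceptional locus of $\pi$, adjunction $K_{\tilde{X}}\cdot E + E^2 = -2$ would give $K_{\tilde{X}}\cdot E = -1$, contradicting $K_{\tilde{X}} \cong \mathcal{O}_{\tilde{X}}$. Hence no exceptional $(-1)$-curves exist and $\pi$ is minimal. Triviality of $K_{\tilde{X}}$ then reads as crepancy of the minimal resolution: all discrepancies along exceptional curves are zero, and in particular each exceptional component is a smooth rational $(-2)$-curve. Pushing $\omega$ forward, or applying Hartogs across the singular point (the minimal resolution being an isomorphism in codimension one over $X$), gives a nowhere-vanishing regular section of the dualizing sheaf of $X$, so $X$ is Gorenstein.

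The second step is to invoke the Du Val--Artin--Brieskorn classification of normal surface singularities admitting a crepant resolution. The exceptional divisor $\sum E_i$ is a negative-definite configuration of smooth rational $(-2)$-curves, and its dual intersection graph is therefore forced by standard linear-algebraic considerations (Vinberg's lemma on negative-definite Cartan-like matrices) to be an ADE Dynkin diagram. Each such configuration contracts to exactly one analytic germ, and that germ is realized concretely as the quotient $\mathbb{C}^2/G$ with $G \subset SL_2(\mathbb{C})$ the corresponding finite ADE subgroup (cyclic, binary dihedral, or binary tetrahedral/octahedral/icosahedral). This identifies $X$ with a du Val singularity.

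The genuine obstacle, and the only ingredient I would import wholesale, lies in the last step: the matching of ADE dual graphs with finite subgroups of $SL_2(\mathbb{C})$ and the fact that the analytic germ is determined by the dual graph. Once this classical content is granted, the proof itself reduces to the observation about $K_{\tilde{X}}$ and the adjunction argument ruling out $(-1)$-curves.
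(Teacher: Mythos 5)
Your proposal is correct in outline, but it takes a genuinely different route from the paper for the simple reason that the paper does not prove this statement at all: its entire ``proof'' is the citation to Artin's 1966 paper [3]. You have therefore supplied an actual argument where the paper supplies a reference. Your route is the standard one: the symplectic form trivializes $K_{\tilde X}$; adjunction then excludes exceptional $(-1)$-curves, so the resolution is minimal; pushing the form forward makes the germ Gorenstein with crepant resolution; $K_{\tilde X}\cdot E_i=0$ together with negative definiteness forces each $E_i$ to be a smooth rational $(-2)$-curve with $E_i\cdot E_j\le 1$, hence an ADE dual graph; and the classical Du Val--Artin--Brieskorn identification matches each graph with $\mathbb{C}^2/G$ for $G\subset SL_2(\mathbb{C})$ finite. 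What the citation buys the paper is brevity; what your argument buys is transparency about exactly which input is irreducible, namely the last step. Two points you should still make explicit: (i) negative definiteness of the exceptional intersection matrix is itself a theorem (Mumford, Grauert) and needs the germ to be normal, which in the paper's application (a double cover branched in a reduced divisor) holds but is not free; and (ii) the claim that the analytic germ is determined by its dual graph is precisely the tautness of rational double points, which in turn rests on the fact that a Gorenstein singularity with crepant resolution is rational---both are quotable, but both deserve to be named rather than absorbed silently into ``standard linear-algebraic considerations.'' With those attributions added, your sketch is a complete and correct replacement for the paper's bare citation.
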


\begin{proof}
It is the main result of the work [3].
\end{proof}

\begin{lemma}
Only du Val singularities which admit non-degenerate holomorphic riemannian metric on their smooth loci are the singularities of $A_n$ type (i.e. quotients of $\mathbb{C}^2$ by the rotation group $\frac{\mathbb{Z}}{n\mathbb{Z}}$). Also, they admit only one real structure up to isomorphism.
\end{lemma}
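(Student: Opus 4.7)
The plan is to pull the hypothetical metric back to $\mathbb{C}^2$ via the quotient map $\pi \colon \mathbb{C}^2 \to \mathbb{C}^2/G = X$, extend it holomorphically across the origin using Hartogs' theorem, and read off the group-theoretic constraint from the fiber at $0$.

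Concretely, let $G \subset \mathrm{SL}_2(\mathbb{C})$ be a finite subgroup with $X = \mathbb{C}^2/G$, and suppose $q$ is a non-degenerate holomorphic Riemannian metric on $X_{sm}$. Since no nontrivial element of $G \subset \mathrm{SL}_2(\mathbb{C})$ is a pseudo-reflection (eigenvalues would have to multiply to $1$ with one eigenvalue equal to $1$), $G$ acts freely on $\mathbb{C}^2 \setminus \{0\}$; hence $\pi$ is étale there and $\tilde q := \pi^* q$ is a $G$-invariant, non-degenerate holomorphic section of $\operatorname{Sym}^2 T^*\mathbb{C}^2$ on $\mathbb{C}^2 \setminus \{0\}$. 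Applying Hartogs to each scalar coefficient in the basis $dx^2, dx\,dy, dy^2$ extends $\tilde q$ over the origin. Now $\det \tilde q$ is a holomorphic function on $\mathbb{C}^2$ that is nonvanishing on the dense open set $\mathbb{C}^2 \setminus \{0\}$; since the zero locus of a nonzero holomorphic function on $\mathbb{C}^2$ has pure codimension one, $\det \tilde q$ must be nowhere zero. In particular, $\tilde q(0)$ is a non-degenerate $G$-invariant quadratic form on $T_0 \mathbb{C}^2 \cong \mathbb{C}^2$, placing $G$ inside $\mathrm{SL}_2(\mathbb{C}) \cap O(\tilde q(0)) = \mathrm{SO}_2(\mathbb{C}) \cong \mathbb{C}^*$. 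A finite subgroup of $\mathbb{C}^*$ is cyclic, so $G \cong \mathbb{Z}/n\mathbb{Z}$; diagonalizing $\tilde q(0)$ identifies the action with the standard $A_{n-1}$ one generated by $\operatorname{diag}(\zeta, \zeta^{-1})$, as claimed.

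For the second assertion, any real structure on the germ $(X, 0)$ lifts to an anti-holomorphic involution $\sigma$ of $(\mathbb{C}^2, 0)$ normalizing $G$, because $G$ is the local fundamental group of $X_{sm}$. Compatibility with the Riemannian metric forces $\sigma$ to preserve $\tilde q(0)$ and to have a totally real fixed locus on which $\tilde q(0)$ is positive. Linearizing at the origin and classifying conjugate-linear involutions of $\mathbb{C}^2$ that normalize the cyclic subgroup of $\mathrm{SO}_2(\mathbb{C})$ and restrict to a positive form on a totally real subspace yields a single orbit under the centralizer of $G$ in $\mathrm{GL}_2(\mathbb{C})$, namely the standard $(x, y) \mapsto (\bar y, \bar x)$ in coordinates where $\tilde q(0) = dx\,dy$.

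The main obstacle lies in this last step: one must be careful about what ``real structure'' means, since the $A_1$ germ $uv = w^2$ for instance admits both an elliptic and a hyperbolic real form and only the elliptic one is Riemannian, and one must verify that the linear classification of admissible $\sigma$ is not disturbed by higher-order $G$-equivariant terms in the metric. The first half of the statement, by contrast, reduces cleanly to Hartogs extension, the codimension-one statement for zeros of holomorphic functions, and the abelianness of $\mathrm{SO}_2(\mathbb{C})$.
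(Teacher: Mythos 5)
Your argument for the main claim is essentially the paper's own proof: pull the metric back along the quotient map, extend it over the origin by Hartogs, linearize, and observe that a finite subgroup of $\mathrm{SL}_2(\mathbb{C})$ preserving a non-degenerate quadratic form lies in $\mathrm{SO}_2(\mathbb{C})\cong\mathbb{C}^*$ and is therefore cyclic. You actually fill in two details the paper glosses over --- that $G$ acts freely off the origin (no pseudo-reflections in $\mathrm{SL}_2$), and that non-degeneracy survives at $0$ because the zero locus of $\det\tilde q$ would otherwise have codimension one --- so this half is fine and, if anything, more complete than the original. For the second assertion (uniqueness of the real structure) you only sketch a strategy and correctly flag the delicate points ($uv=w^2$ having inequivalent real forms, higher-order equivariant terms); note that the paper's own proof says nothing at all about this part, so your sketch does not fall short of the source, but neither text actually proves it.
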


\begin{proof}
Consider the covering of the singularity with $\mathbb{C}^2$. The pullback of the riemannian metric is regular and non-degenerate in $0$ because of Hartogs' theorem. Then let us consider the action of our group on the tangent space to $0$ (linearize it). The group which preserves riemannian metric and volume form on $\mathbb{C}^2$ is the complexification of the group of rotations $\mathbb{C}^*$, and only finite subgroups of it are $\frac{\mathbb{Z}}{n\mathbb{Z}}$.
\end{proof}

\begin{thm}[Toy theorem]
Let us consider the real-algebraic cometric $g^{ij}$ on $X = \mathbb{R}^2$ such that $\sqrt{|G|}$ is locally integrable and satisfying the condition (2). Then singularities of the real part of the double covering, branched in the divisor $D = 0$ are symplectic, hence, du Val and, hence, $A_n$.
\end{thm}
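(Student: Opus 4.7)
The plan is to chain three implications, matching the three claims in the statement. First, produce a holomorphic volume form $\omega$ on the smooth locus of the double cover $S$ by applying Lemma~3.1 in dimension $d=2$. Second, use the integrability hypothesis to show $\omega$ extends to a regular $2$-form on a resolution $\rho : \tilde S \to S$, so that $(S,p)$ has symplectic singularities. Third, invoke the preceding theorem of Artin to deduce du Val, and then the preceding lemma (du Val plus a non-degenerate holomorphic metric forces $A_n$) to conclude.

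\textbf{Constructing $\omega$.} In dimension two the ``codimension $2$'' exception in Lemma~3.1 is vacuous, so $\pi^*g_{ij}$ is a regular non-degenerate holomorphic metric on all of $S^{\mathrm{sm}}$. Its volume form is the sought-after holomorphic $2$-form $\omega$ on $S^{\mathrm{sm}}$, well-defined up to sign.

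\textbf{Extending $\omega$ via integrability.} Let $p \in S$ be a real singular point, fix a resolution $\rho : \tilde S \to S$ of a neighborhood of $p$ with exceptional divisor $E = \bigcup E_i$, and in local coordinates $(u,v)$ near a smooth point of a component $E_i = \{u=0\}$ write $\rho^*\omega = u^{a_i} h\, du \wedge dv$ with $h$ holomorphic and nowhere vanishing and $a_i \in \mathbb{Z}$. We want $a_i \geq 0$ for every $i$. The hypothesis that $\sqrt{|G|}$ is locally integrable on $\Omega$ pulls back to local $L^1$-integrability of $|\omega|$ on the real part of $S$ near $p$; since $\rho$ is a birational isomorphism away from $E$ and preserves integrals, it also forces $|\rho^*\omega|$ to be locally $L^1$ on the real part of a neighborhood of $\rho^{-1}(p)$ in $\tilde S$. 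Whenever $E_i$ meets the real locus this immediately yields $a_i \geq 0$, since near a real point of $E_i$ the density behaves as $|u|^{a_i}$, which fails to be locally integrable in one real variable as soon as $a_i \leq -1$. Components of $E$ disjoint from the real locus of $\tilde S$ (necessarily occurring in complex-conjugate pairs) are not detected by this argument and must be dealt with separately: I would use the real-algebraicity of $g$ to push a real structure through $\rho$ onto $\tilde S$ and $E$, and then rule out such conjugate pairs by comparing the local real-analytic type of $(S,p)$ with the complex-analytic types compatible with it.

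\textbf{Main obstacle and conclusion.} The technical heart of the argument is the last point: controlling the discrepancies $a_i$ along exceptional components that do not touch the real locus. This is presumably where the real log canonical threshold theory referenced at the start of the appendix enters. Once $\rho^*\omega$ is shown to be holomorphic along all of $E$, $(S,p)$ is canonical in the sense of minimal model theory, which in dimension two coincides with du Val, so the preceding theorem of Artin yields symplecticity; the preceding lemma, whose hypothesis of a non-degenerate holomorphic metric on $S^{\mathrm{sm}}$ is supplied by Lemma~3.1, then forces the type to be $A_n$.
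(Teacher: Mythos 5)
Your $\omega$ is the same object as the form the paper works with: since $\det(g_{ij})=1/D$, the volume form of $\pi^*g$ is $\pm D^{-1/2}\,dx\wedge dy$, which is exactly the $v^{-1}$ of the paper's proof; and your idea that integrability of $\sqrt{|G|}$ bounds the discrepancies is the same bookkeeping the paper does with the multiplicities of $D(x,y)(\frac{\partial}{\partial x}\wedge\frac{\partial}{\partial y})^{\otimes 2}\in\Gamma(K^{-2})$ under blow-ups. But the organization differs in a way that matters, and your version has two genuine gaps. First, the gap you flag yourself is real and your proposed repair is not a proof: exceptional components missing the real locus cannot simply be ``ruled out'' (they occur, e.g.\ whenever the resolution forces blow-ups at complex-conjugate pairs of points), and the real integrability hypothesis says nothing about the discrepancy along such a component. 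The paper does not solve this by an extra lemma; it avoids ever needing to \emph{extend} a form across an unknown resolution, by instead resolving the curve $D=0$ downstairs to an SNC configuration, using integrability of $|D|^{-1/2}$ to force each exceptional multiplicity to be $0$ or $1$, then blowing up the remaining crossing points so that the multiplicity-one components become disjoint, and finally defining $\tilde S$ as the double cover of $\tilde X$ branched there. On that explicitly constructed $\tilde S$ the form $v^{-1}$ is regular and \emph{nowhere vanishing} (it regularizes across the branch locus exactly as in Lemma 3.1), so $\tilde S$ is a symplectic resolution by construction.

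Second, your endgame reverses the logic of the theorem. Showing $a_i\ge 0$ on an arbitrary resolution gives that $(S,p)$ is canonical; to pass from ``canonical'' to ``du Val'' you would need the classification of canonical surface singularities, which is a different theorem from the one the paper quotes — Artin's result as stated goes from \emph{symplectic} to du Val, not the other way, and it certainly does not ``yield symplecticity'' from du Val. For the singularity to be symplectic in the sense of Definition 6.2 you need a resolution on which the $2$-form is non-degenerate, i.e.\ $a_i=0$, not merely $a_i\ge 0$; your argument as written never produces such a resolution. The paper's construction gives it directly, which is precisely why the chain ``symplectic $\Rightarrow$ du Val $\Rightarrow$ $A_n$'' then runs in the correct direction (Artin, then Lemma 6.3 using the non-degenerate holomorphic metric from Lemma 3.1 — that last step of yours is fine). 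To salvage your route you would have to either carry out the paper's explicit double-cover construction anyway, or import the canonical-implies-du-Val classification and still handle the non-real exceptional components.
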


\begin{proof}
First of all, $D$ has no multiple components, otherwise $\sqrt{|G|}$ it is not locally integrable around the smooth point of multiple component. We are now going to consider the resolution of the singularity point and keep track of multiplicites of vanishing of the tensor field $D(x, y) (\frac{\partial}{\partial x} \wedge \frac{\partial}{\partial y})^{\otimes 2} \in \Gamma(K^{-2})$ while we are blowing up singular points.

The multiplicity of the pullback after the blow-up is the multiplicity of the singularity (it is always strictly greater than $1$, as we never blow up a smooth point) minus $2$, because relative canonical class is an exceptional curve with multiplicity $1$, and we consider the section of $K_X^{-2}$. So, the multiplicity of the exceptional curve after the blow-up is always non-negative. If it is at least $2$, $\sqrt{|G|}$ is non-integrable, hence it must be $0$ or $1$.

So, after blowing everything up to the SNC divisor we get the collection of normally crossing rational curves with multiplicity $1$. Let us consider the blow-ups in the points of normal crossings: then the pullback will have multiplicity $0$ on the exceptional curves, and so the resulting curves with multiplicity $1$ won't intersect at all. Let us call this resolution $\tilde{X}$, and let us denote the projection by $\pi$.

Let us consider the double coverings
\[S = \{(p, v), p \in X, v \in \Lambda^2 T_p (X)| v^2 = D|_p\} \]
\[ \tilde{S} = \{(p, v), p \in \tilde{X}, v \in \Lambda^2 T_p (\tilde{X})| v^2 = \pi^*D|_p\}\]

$\tilde{S}$ is clearly a resolution of $S$. But $\tilde{S}$ endowed with non-vanishing regular 2-form $v^{-1}$ (it regularises on the branching divisor, analogously to the proof of lemma 3.1), hence, it is symplectic resolution $S$. Now, by the theorem of Artin it is du Val, and by the lemma 6.3 it is $A_n$
\end{proof}

Now we are going to significantly weaken assumption of the theorem.

\begin{thm}[5.2]
Let $g^{ij}$ be as in the Toy Theorem, $\Omega$ be the closure of one of the connected components of $\mathbb{R}^2 \\ \{x| D(x) = 0\}$, and let us weaken the integrability condition to be ''$\sqrt{G}$ is integrable in $\Omega$''. Then all the singular points of $D = 0$ contained in $\Omega$ are of the $A_n$ type.
\end{thm}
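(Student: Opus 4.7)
The plan is to mirror the proof of the Toy Theorem, localizing all integrability arguments to a neighborhood of the singular point $p$ inside $\Omega$. Only one input of the Toy Theorem genuinely uses global integrability of $\sqrt{|G|}$ on $\mathbb{R}^{2}$, namely the elimination of multiple components of $D$; the main new content is to replace this step by a local polar-coordinate estimate on the real angular sector cut out by $\Omega$ at $p$.

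First I would re-establish local squarefreeness of $D$ at $p$. For a multiple component $D_{1}^{m}$ whose real locus meets $\partial\Omega$ in a one-dimensional curve near $p$, the original Toy Theorem argument applies verbatim at any smooth real boundary point lying in $\partial\Omega$. The genuinely new case is a multiple irreducible real factor whose only real point near $p$ is $p$ itself (for instance $(x^{2}+y^{2})^{m}$). For this I would pass to polar coordinates $(r,\theta)$ centred at $p$: if $D$ has vanishing order $N$ at $p$ with leading homogeneous part $D_{N}$, the integrand of $\int_{U\cap\Omega}\sqrt{G}\,dx\,dy$ reduces to $r^{1-N/2}\,h(\theta)^{-1/2}$ over the angular sector corresponding to $\Omega$, where $h(\theta)=D_{N}(\cos\theta,\sin\theta)$. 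Radial convergence forces $N\le 3$, angular convergence forces every branch of $D$ tangent to the closure of the $\Omega$-sector to have multiplicity one, and a short case analysis then rules out any irreducible factor of $D$ through $p$ of multiplicity $\ge 2$.

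Once $D$ is locally squarefree I would run the iterated blow-up procedure of the Toy Theorem in a neighborhood of $p$ and track the multiplicity of $\pi^{*}D$ on each exceptional curve. The crucial observation is that any exceptional divisor arising from a real centre lying above $\Omega$ meets the lift $\tilde\Omega$ in an open arc, because the tangent cone of $\Omega$ at that centre is a non-degenerate real sector and the blow-up faithfully records it as an open arc of the real locus of the exceptional curve. The integrability bound on this arc yields the same constraint as in the Toy Theorem---at most $1$ as a section of $K^{-2}$---so after sufficiently many blow-ups one obtains a log resolution $\pi\colon\tilde X\to X$ on which $\pi^{*}D$ is SNC with all components of multiplicity one and pairwise disjoint.

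The double cover $\tilde S=\{v^{2}=\pi^{*}D\}$ is then smooth above the preimage of $p$ and carries the non-vanishing holomorphic two-form $v^{-1}\pi^{*}(dx\wedge dy)$ exactly as in the Toy Theorem, exhibiting $\tilde S\to S$ as a symplectic resolution of the germ $(S,p)$. Artin's theorem then identifies $(S,p)$ as a du Val singularity, and Lemma 6.3 refines this to $A_{n}$ type, completing the argument. The main obstacle is the polar-coordinate step in the first paragraph: it replaces the global squarefreeness reduction of the Toy Theorem and is the one place where genuinely new input beyond the Toy Theorem is required. Once it is in place, the rest of the argument is a faithful localization of the Toy Theorem machinery near $p$.
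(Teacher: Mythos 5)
There is a genuine gap, in two places. First, your polar-coordinate step does not deliver what you need. The decoupling of $\int_{U\cap\Omega}|D|^{-1/2}$ into a radial factor $r^{1-N/2}$ times an angular factor $h(\theta)^{-1/2}$ is only valid in subsectors bounded away from the zero directions of the leading form $D_N$; near a direction tangent to a branch of $D=0$ the higher-order terms dominate and the product estimate fails (already for $D=y$, $\Omega=\{y\ge 0\}$ the decoupled angular integral $\int\sin^{-1/2}\theta\,d\theta\cdot\int r^{1/2}dr$ mis-reports the behaviour: the genuine double integral converges while naive branch-by-branch angular reasoning of this kind cannot distinguish multiplicities along tangential directions). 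The dangerous configurations are exactly multiple components tangent to the boundary of $\Omega$ to high order, e.g. $l=\{y=0\}$ of multiplicity $m$ against $\omega=\{y=x^{2n}\}$ with $\Omega=\{y\ge x^{2n}\}$; the paper's Lemma 6.7 handles this by an inductive blow-up argument in which \emph{two} blow-ups are needed to produce a boundary exceptional curve of multiplicity $m>1$ and hence a contradiction. Your "short case analysis" would have to reproduce precisely this, and polar coordinates at the original point cannot see it.

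Second, and more seriously, your resolution step assumes that every exceptional curve in the log resolution is constrained by integrability on $\Omega$. That is false: after a blow-up, proper transforms of adjacent components can become \emph{nonadjacent} (disjoint from the lift of $\Omega$), and from then on integrability of $\sqrt{G}$ on $\Omega$ imposes no constraint on their multiplicities or on the exceptional curves produced while resolving them. If such a component carried a bad singularity, the SNC model could acquire exceptional curves of multiplicity $\ge 2$ and the double cover $\tilde S$ would fail to be the desired symplectic resolution. The entire second half of the paper's proof is devoted to closing exactly this loophole: it first bounds the order of the singularity by $3$ (using that the first exceptional divisor is always adjacent or boundary), and then runs a case analysis (order $2$; order $3$ with no adjacent component; adjacent of order $2$ against boundary of order $1$; adjacent of order $1$ against boundary of order $2$) showing that whenever a component becomes nonadjacent it is already in an $A_n$ configuration, so its further resolution introduces no multiple components. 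None of this appears in your proposal, and without it the claim that "after sufficiently many blow-ups one obtains a log resolution with all components of multiplicity one" is unsupported.
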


\begin{proof}

Before starting the proof let us fix the terminology. We will call the component of $det(g^{ij}) \in \Gamma(K_X^{-2})$ the \textbf{boundary component} if it is contained in the analytic closure of the boundary, the \textbf{adjacent component} if it intersects the boundary (of course, in the singular point) and \textbf{nonadjacent component} if it doesn't intersect $\Omega$ at all. Our convention will be that when we do a blow-up we will denote the proper preimages of the components by the same letters by which we denoted the components, and the proper preimage of $\Omega$ is defined as the closure of the preimage of the interior of the $\Omega$.

\begin{note}
In our setting, Zariski closure of $\partial \Omega$ is $D = 0$, however, for the local analytic closure around the singularity it is not obvious and, also, the situation might change after the blow-up. Before doing any blow-ups we doesn't have any nonadjacent components, because our considerations are local around the singularity.
\end{note}

\begin{note}
Boundary components of multiplicity higher than $1$ are impossible due to integrability conditions. Adjacent components might become nonadjacent after a blow-up (and become nonadjacent in the SNC resolution), but they do affect the multiplicities on the exceptional divisor.
\end{note}

\begin{note}
Exceptional divisor of the blow-up of the singularity is always adjacent or boundary. To prove it consider the smooth curve in interior of $\Omega$ which tends to the singularity, and take its proper preimage.
\end{note}

We always proceed by induction by the number of blow-ups needed to resolve the singularity.

\begin{lemma}
Adjacent component with multiplicity higher than $1$ leads to the non-integrability of $\sqrt{G}$.
\end{lemma}
\begin{proof}
 Let us denote this adjacent component as $l$, its multiplicity as $m$, and all other components as $\omega$.
 The first case is that $ord_\omega$ or $ord_l$ in the singularity is higher than one. Then the exceptional divisor will have multiplicity higher than $1$, it is adjacent and we proceed by induction.
 The second case is that $ord_\omega = ord_l = 1$, and it means that $l$ and $\omega$ are smooth, and our singularity is just the multiple tangent, so we might assume $l$ to be $y = 0$, $\omega$ to be $y = x^{2n}$, $\Omega$ to be $y \geq x^{2n}$. We blow-up the singularity and denote the exceptional curve by $e$, it has the multiplicity $m-1$. $e$ is boundary. Let us blow-up the intersection point of $e$ and $\omega$ and denote the exceptional curve as $e'$. Then $e'$ is boundary and has multiplicity $m$. As $m > 1$ it contradicts integrability.
\end{proof}

So, the order of singularity $\leq 3$ (because exceptional divisor is always boundary or adjacent, and multiplicity of exceptional divisor is $ord_D - 2$) and it has no multiplicities.

We are going to check that after the blow-up all nonadjacent components are of $A_n$ type and proceed by induction - then we are guaranteed to get the resolution from the Toy Theorem, and the rest of the proof is the same.

\begin{case} Singularity is of order $2$.
\end{case}
Then it is of type $A_n$, process terminated.

\begin{case} Singularity is of order $3$, no adjacent components.
\end{case}
No nonadjacent components in the resolution.

\begin{case} Adjacent component is of order $2$, and boundary is of order $1$.
\end{case}
Changing the coordinates we can assume that the boundary is a line, and $\Omega$ is a half-plane. Then, after the blow-up, any point of exceptional curve lies in $\partial \Omega$, hence adjacent component won't become nonadjacent.

\begin{case} Adjacent component is of order $1$, and boundary is of order $2$.
\end{case}
By the change of coordinates we can assume that adjacent component is a line. If the adjacent component becomes nonadjacent after the blow-up then it intersects only the exceptional curve, and transversally ($A_2$ singularity), condition checked.

We are done - we blow up to the SNC divisor, don't meet any adjacent multiple components by the lemma 6.7, nonadjacent are always of $A_n$ type so when we resolve them we don't meet multiple components, too, and then we just use the proof of the Toy Theorem.
\end{proof}

\end{document}